\documentclass[a4paper,11pt]{article}
\usepackage{amsfonts}
\usepackage{latexsym}
\usepackage{amssymb}
\usepackage{amsmath}
\usepackage{times}
\usepackage{subfig}
\usepackage{amsthm}          
\usepackage{mathrsfs}        
\usepackage[dvips]{graphicx}
\usepackage[all]{xy}
\usepackage{bbm}

\usepackage{hyperref}

\usepackage{multirow}

\setlength{\textwidth}{16cm} \setlength{\textheight}{54pc}
\setlength{\topmargin}{0cm} \setlength{\oddsidemargin}{-0.05cm}
\setlength{\evensidemargin}{-0.05cm}



%

\newtheorem{theo}{Theorem}

\newtheorem{coro}[theo]{Corollary}
\newtheorem{lemm}[theo]{Lemma}

\newtheorem*{theo*}{Theorem}

\theoremstyle{definition}

\newtheorem{rema}[theo]{Remark}

\newtheorem{exem}[theo]{Example}



\newcommand{\ovl}{\overline}

\newcommand{\mc}{\mathcal}

\newcommand{\ce}{\mathbb{C}}

\newcommand{\F}{\mathcal{F}}
\newcommand{\G}{\mathcal{G}}

\newcommand{\Z}{\mathbb{Z}}
\newcommand{\N}{\mathbb{N}}

\renewcommand{\P}{\mathbb{P}}

\newcommand{\C}{\mathcal{C}}
\newcommand{\Cs}{\mathscr{C}}

\newcommand{\Pc}{\mathcal{P}}
\newcommand{\Q}{\mathbb{Q}}

\newcommand{\ds}{\displaystyle}

\newcommand{\sii}{\Longleftrightarrow}

 \DeclareMathOperator{\End}{End}\DeclareMathOperator{\Hom}{Hom}

\DeclareMathOperator{\sing}{Sing}

\newcommand{\fol}{\mathop{\mathbb{F}ol}\nolimits}

\renewcommand{\tilde}{\widetilde}

\hyphenation{in-va-rian-tes}

\DeclareMathOperator{\fix}{Fix}

\title{DEGREE OF THE FIRST INTEGRAL OF A\\ FOLIATION IN THE PENCIL $\Pc_4$}
\author{{\bf Liliana PUCHURI MEDINA}\footnote{Instituto de Mat\'ematica Pura e Aplicada 
(IMPA), Estrada Dona
Castorina 110, Rio de Janeiro, RJ, CEP 22460-320, Brazil,
{\tt lilianap@impa.br}.
The work of this author was partially supported by CNPQ}}
\date{}
\begin{document}

\maketitle

\begin{abstract}
Let $\Pc_4$ be the linear family of foliations of degree $4$ in $\P^2$ given by A. Lins Neto, whose set of parameters with first integral $I_p(\Pc_4)$ is dense and countable.
In this work, we will calculate explicitly the degree of the rational first integral of the foliations in this linear family, as a function of the parameter.

\end{abstract}

\section{Introduction}
One of the main problems in the theory of planar vector fields is to characterize the 
ones which admit a first integral.
The invariant algebraic curves are a central object in integrability theory since 1878, 
year when Darboux found connections between algebraic curves and the existence of first integrals of polynomial vector fields.
Thus, the first question was to know if a polynomial vector field has or not invariant algebraic curves, which was partially answered by Darboux in~\cite{GD}.
The most important improvements of Darboux's results  were given by Poincar\'e in 1891, who tried to answer the following question:
\begin{quote}
``Is it possible to decide if a foliation in $\P^2$ has a rational first integral?''
\end{quote}
This problem is known as the \emph{Poincar\'e Problem}.
In~\cite{Poin}, he observed that it is sufficient to bound the degree
of a possible algebraic solution. By imposing conditions on the singularities of the foliation he
obtains necessary conditions which guarantee the existence of a rational first integral. 
More recently, this problem has been reformulated 
as follows: 
given a foliation on $\P^2$, try to  bound the degree of the generic solution using information depending only on the foliation, 
for example its degree or the eingenvalues of its singularities.

Several authors studied this problem, see for instance~\cite{Ca,LNCe,Z,So}. 
In 2002, Lins Neto (cf.~\cite{LN1}) built some notable 1-parameter families of foliations in
$\P^2$, where the 
set of parameters in which the foliation has a first integral
is dense
and countable. The importance of these families is that there is no bound depending only on the degree 
and the analytic type of their singularities.
One of such
families is the pencil $\Pc_4$ in $\P^2$, whose set of parameters of foliations which have a first integral, denoted by $I_p(\Pc_4)$,
is the imaginary quadratic field $\Q(\tau_0)$, where $\tau_0=e^{2\pi i/3}$.

The purpose of this work is to calculate the degree of the foliations in $\Pc_4$
with rational first integral as a function of the parameter. For this, we first relate the
pencil $\Pc_4$ with a pencil of linear foliations $\Pc_4^*$ in a complex
torus $E \times E$, where $E=\ce / \langle 1, \tau_0 \rangle$. Then we derive the formula of the degree using the ideal norm of the ring
$\Z[\tau_0]$ as sketched below. Consequently, we are capable to address the
Poincar\'e Problem for the foliations in $\Pc_4$.

Given a foliation $\F_t  \in \Pc_4$, with $t\in I_p(\Pc_4)$ there exists an
unique foliation
$\G_{\alpha(t)}\in \Pc_4^*$  where $\alpha(t)=\ds\frac{t-1}{-2-\tau_0}$. Then
writing
$\alpha(t)=\ds\frac{\alpha_1}{\beta_1}$, with
$\alpha_1,\beta_1\in\Z[\tau_0]$ and $(\alpha_1,\beta_1)=1$, we have proved the
following result:
\begin{theo*}
If  $d_t$ is the degree of the first integral of $\F_t^4$ then
\[
d_t=N(\beta_1)+N(\alpha_1)+N(\beta_1-\alpha_1)+N(\beta_1+\tau_0\alpha_1),
\]
where $N(\beta)=a^2+b^2-ab$, for $\beta=a+\tau_o b \in\Z[\tau_0]$.
\end{theo*}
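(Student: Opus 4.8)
The plan is to transport the computation of $d_t$ to the torus $E \times E$ through the generically finite map $\Phi : E \times E \dashrightarrow \P^2$ that conjugates $\G_{\alpha(t)} \in \Pc_4^*$ to $\F_t \in \Pc_4$, and then to read the degree off as an intersection number, exploiting the complex multiplication of $E$ by $\End(E) = \Z[\tau_0]$. Writing $\alpha = \alpha(t) = \alpha_1/\beta_1$ with $(\alpha_1,\beta_1)=1$, I would first describe the generic leaf $C$ of the linear foliation $\G_\alpha$ as the image of the homomorphism
\[
f : E \to E \times E, \qquad f(u) = (\beta_1 u,\, \alpha_1 u),
\]
where $\beta_1,\alpha_1$ act via $\Z[\tau_0]$; the slope of $f(E)$ equals $\alpha_1/\beta_1=\alpha$, so $C=f(E)$ is a leaf and every other leaf is a translate of $C$, hence homologous to it. Using a B\'ezout relation $p\alpha_1+q\beta_1=1$ in $\Z[\tau_0]$ I would check that $f$ is injective, so $C$ is an elliptic curve with class $[C]=f_*[E]$ in $\mathrm{NS}(E\times E)$.

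Next I would reduce $d_t$ to an intersection number. Since $d_t$ is the degree of the generic plane leaf $\Phi(C)$, the projection formula gives $d_t\cdot\deg(\Phi|_C)=C\cdot\Phi^*\Og(1)$; as a generic leaf has trivial stabilizer under the deck group of $\Phi$, the restriction $\Phi|_C$ is birational onto its image and $d_t=C\cdot\Phi^*\Og(1)$. The geometric heart of the argument is then the identification of the line class, which I expect to be
\[
\Phi^*\Og(1) \;\equiv\; [F_1] + [F_2] + [\Delta] + [D],
\]
where $F_1=\{0\}\times E$, $F_2=E\times\{0\}$, $\Delta=\{x=y\}$ and $D=\{x+\tau_0 y=0\}$ are the four elliptic curves mapping onto the distinguished invariant lines of the pencil. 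These four classes form a basis of $\mathrm{NS}(E\times E)\cong\Z^2\oplus\End(E)$, and the identification is to be extracted from the explicit construction of $\Phi$ (its degree, its branch locus, and the incidence of a generic line with the four special lines).

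With this in hand the four summands fall out of a single computation. By the projection formula, $C\cdot F_1$, $C\cdot F_2$, $C\cdot\Delta$ and $C\cdot D$ pull back, respectively, to the divisors $\ker(\beta_1)$, $\ker(\alpha_1)$, $\ker(\beta_1-\alpha_1)$ and $\ker(\beta_1+\tau_0\alpha_1)$ on $E$, because composing $f$ with the homomorphism defining each curve ($x$, $y$, $x-y$, $x+\tau_0 y$) yields multiplication by $\beta_1$, $\alpha_1$, $\beta_1-\alpha_1$, $\beta_1+\tau_0\alpha_1$. Since the degree of a nonzero endomorphism $\phi\in\Z[\tau_0]$ equals its norm $N(\phi)$, I obtain $C\cdot F_1=N(\beta_1)$, $C\cdot F_2=N(\alpha_1)$, $C\cdot\Delta=N(\beta_1-\alpha_1)$ and $C\cdot D=N(\beta_1+\tau_0\alpha_1)$, and summing yields the stated formula. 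The degenerate slopes $\alpha_1=0$ or $\beta_1=0$ are absorbed correctly, since then one factor becomes the self-intersection of a subtorus, equal to $0=N(0)$.

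The main obstacle is the middle step: pinning down $\Phi^*\Og(1)$ exactly as $[F_1]+[F_2]+[\Delta]+[D]$, with coefficient one and no stray multiplicity, and establishing $\deg(\Phi|_C)=1$. This forces me to control the fine geometry of the covering $\Phi$ — in particular its degree, which should equal $\big([F_1]+[F_2]+[\Delta]+[D]\big)^2=12$, and the way the four torus curves ramify onto the invariant lines of $\Pc_4$. Once the class $\Phi^*\Og(1)$ is correctly normalized, the remaining intersection computations are routine applications of the identity $\deg\phi=N(\phi)$ on the ring of Eisenstein integers.
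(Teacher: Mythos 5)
Your overall strategy is essentially the paper's: transport the degree computation to $X_0=E\times E$ through the generically finite map, describe the generic fiber of the first integral as a translate of the subtorus $\{(\beta_1u,\alpha_1u)\}$, and convert each of the four intersection numbers into a norm via $\deg(\phi)=N(\phi)$ for $\phi\in\Z[\tau_0]$. Those parts of your argument are correct (including injectivity of $f$ via the B\'ezout relation, $\deg(\Phi|_C)=1$ because a generic leaf is not invariant under the deck transformation $\varphi$, and the four composite endomorphisms $\beta_1,\alpha_1,\beta_1-\alpha_1,\beta_1+\tau_0\alpha_1$). The genuine gap is the step you yourself flag: the identification $\Phi^*\Og(1)\equiv[F_1]+[F_2]+[\Delta]+[D]$. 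This is not a routine normalization --- it is the actual content of the theorem, and it is exactly where the paper spends its effort: rather than computing the class of the pullback of a line, the paper intersects $C$ with one explicit auxiliary curve $C_{1,0}$ (the image of a generic translate of $E\times\{0\}$, shown to be a plane cubic) and pushes that single intersection number through the resolution of $g$ (blow-up $\pi$ of the nine fixed points, $\Z/3$-quotient $\tilde h$, blow-down $\pi_1$) using the projection formula, keeping track of multiplicities at the twelve contracted elliptic curves. Your proposal, as written, leaves the corresponding work undone.

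Moreover, the criterion you propose for pinning down the class would lead you astray, because it rests on a false identity. You expect $\deg\Phi=\bigl([F_1]+[F_2]+[\Delta]+[D]\bigr)^2=12$; in fact $\deg\Phi=3$, since $\Phi$ is the quotient by $\varphi(x,y)=(\tau_0 x,\tau_0 y)$ composed with birational maps. The identity ``degree equals self-intersection of the pulled-back hyperplane class'' holds for morphisms, not for rational maps: $\Phi$ has nine indeterminacy points (the fixed points of $\varphi$, which blow up onto the nine invariant lines), and the closure of the preimage of \emph{every} line passes through all nine of them; hence the preimages of two generic lines meet in $3+9=12$ points, of which only $3$ lie over $L\cap L'$. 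So the class formula you conjecture is true (and does have square $12$) even though the degree is $3$; testing it against ``degree $=12$'' would make you reject the right answer. To close the gap along your lines: resolve $\Phi$ as above, check that the total transform of a generic line meets each exceptional divisor once and each of the twelve contracted elliptic curves zero times, push down, and conclude that $[\Phi^{-1}(L)]$ has intersection number $3$ with each of $[F_1],[F_2],[\Delta],[D]$; since these four classes have nondegenerate Gram matrix (determinant $-3$) and span $\mathrm{NS}(X_0)\otimes\Q$, which has rank $4$ by complex multiplication, this forces $[\Phi^{-1}(L)]=[F_1]+[F_2]+[\Delta]+[D]$, and your norm computation then finishes the proof.
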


Besides we compute the growth of the
function which associates to every $n\in \N$, the number of parameters for which
the corresponding foliation has a first integral of degree at most $n$.
More specifically, if $\pi_{\Pc_4}(n)$ denote the number of parameters with
first integral of degree at most $n\in \N$, then
\[
\ds \pi_{\Pc_4}(n)=O(n^2).
\]

\section{Preliminaries}
Let $K\subset \ce$ an \emph{algebraic number field}  and $\mathcal{O}_K$ the ring of algebraic integers contained in $K$.
Given an ideal $I$ of $\mathcal{O}_K$ we consider the quotient ring $\ds\mathcal{O}_K / I$ which is finite (cf.~\cite[p. 106]{Tall}). 
The~\emph{ideal norm} of $I$, denoted by $N_{\mathcal{O}_K}(I)$,  is the cardinality of the $\ds\mathcal{O}_K / I$.

The~\emph{Dedekind Zeta Function of K} is defined for a complex number $s$ with $Re(s)>1$, by the Dirichlet series
\[
\zeta_K(s)=\sum_{I\subset \mathcal{O}_K}\frac{1}{N_{\mathcal{O}_K}(I)^s},
\]
where $I$ ranges through the non-zero ideals of the ring of integers
$\mathcal{O}_K$ of $K$.
This sum converges absolutely for all complex numbers $s$ with $Re(s)
> 1$. Note that $\zeta_{\Q}$
coincides with the  Riemann zeta function.

Let $E=\ce/\Gamma$ be an elliptic curve, where $\Gamma=\langle1,\tau\rangle$
and $\End(E):=\Hom(E,E)$. Then the field $\End(E)\otimes \Q$ is isomorphic  to a
number field $K$ such that $\mathcal{O}_K\simeq\End(E)$. Let
$\alpha,\beta\in\End(E)$, then define the morphism $\varphi_{\alpha,\beta}:E\to E\times E$ as
\[
\varphi_{\alpha,\beta}(x)=(\alpha x,\beta x). 
\]
Note that the image $E_{\alpha,\beta}$ of $\varphi_{\alpha,\beta}$ is an
elliptic curve.
Given $\alpha,\beta,\gamma,\delta\in\End(E)$, then the \emph{ intersection
number}
of the elliptic curves $E_{\alpha,\beta}$ and $E_{\gamma,\delta}$ is given by
\begin{equation}\label{eq:eq1}
E_{\alpha,\beta}\cdot E_{\gamma,\delta}=\frac{N_{\mathfrak{O}_K}\left(\det\left(\begin{array}{cc}
\alpha&\beta\\
\gamma&\delta
\end{array}\right)\right)}{N_{\mathfrak{O}_K}(\alpha,\beta)N_{\mathfrak{O}_K}(\gamma,\delta)},
\end{equation}
where $N_{\mathcal{O}_K}(a_1,\ldots,a_r)$ is the norm of the ideal generated by
$a_1,\ldots,a_r\in\End(E)$ (cf.~\cite[Lemma 3]{Shim}).

 As an application consider the following example:
\begin{exem}\label{ex1}
Let the elliptic curve $E=\ce/\langle1,\tau_0\rangle$, with
$\tau_0=e^{2\pi i/3}$, then  $\End(E) \simeq \Z[\tau_0]$. 
Given $\alpha=a +\tau_0 b \in \Z[\tau_0]$  the norm of ideal $\langle \alpha\rangle$ 
is $N_{\Z[\tau_0]}(\alpha)=|\alpha|^2=a^2+b^2-ab$. By~\eqref{eq:eq1}, given
$\alpha,\beta,\gamma,\delta\in\Z[\tau_0]$ such that $(\alpha,\beta)=1$ and
$(\gamma,\delta)=1$ the intersection number
of the elliptic curves $E_{\alpha,\beta}$ and $E_{\gamma,\delta}$ is
\begin{equation}\label{ejex1}
E_{\alpha,\beta}\cdot E_{\gamma,\delta}=N_{\Z_{\tau_0}}( \alpha \gamma-\beta \delta).
\end{equation}
\end{exem}

From now on, $\tau_0$ will denote the complex number $e^{2\pi i/3}$.

\subsection{The pencil $\mathcal{P}_4$ in $\mathbb{P}^2$ and the configuration $\mathcal{C}$}
 In~\cite[\S 2.2]{LN1}, Lins Neto defines the pencil $\Pc_4=\{\F_{\alpha}^4\}_{\alpha\in\ovl{\ce}}$ of degree 4 in $\P^2$,
where $\F_{\alpha}^4$ is defined by the 1-form $\omega+\alpha\eta$, where
\[
\begin{aligned}
	\omega &= (x^3-1)xdy-(y^3-1)ydx,\\
	\eta &= (x^3-1)y^2dy-(y^3-1)x^2dx,
\end{aligned}.
\]
 Let us state some properties of the pencil $\Pc_4$:
\begin{enumerate}
\item The tangency set of the pencil $\Pc_4$, given by $\omega\wedge \eta=0$, is the algebraic curve
\[
\Delta(\Pc_4)=\big\{[x:y:z]
\in\P^2\::\:(x^3-z^3)(y^3-z^3)(x^3-y^3)=0 \big\}.
\]
Then $\Delta(\Pc_4)$ is formed by nine invariant lines. Besides, the set of intersections of these lines 
is formed by twelve points. We will denote such lines and points by $\mc{L}=\{L_1,\ldots,L_9\}$ and $P=\{e_1,\ldots,e_{12}\}$.

\item If $\alpha\notin\{1,\tau_0,\tau_0^2,\infty\}$ then $\F_{\alpha}$
 has 21 non-degenerated singularities, where nine of them are of type $(-3:1)$, 
and the remaining twelve are radial singularities contained in $P$. 
In particular, $\F_{\alpha}$ has degree 4.
\item If $\alpha\in\{1,\tau_0,\tau_0^2,\infty\}$ then
$\sing(\F_{\alpha})=P$.
\end{enumerate}
Let $\C=\{\mc{L},P\}$
be the configuration of points and
the
nine lines in $\P^2$, as showed in Figure~\ref{fig1}.
\begin{figure}[h]
\begin{center}
\input{fignn3.pspdftex}
\end{center}
\caption{}\label{fig1}
\end{figure}

\subsection{The pencil $\mathcal{P}_4^*$ }\label{secp4conj}
 Let $E=\ce/\Gamma$ be an elliptic curve,
 where $\Gamma=\langle 1,\tau\rangle$ and $X=E\times E$. Let $(x,y)$ be a system of coordinates of $\ce^2$ and $\pi:\ce^2\to X$ be the natural projection.
  Let
$\Pc_1=\{\F_{\alpha}\}_{\alpha\in\ovl{\ce}}$ be the pencil of linear foliations in $\ce^2$,
 where $\F_{\alpha}$ is induced  by the 1-form
\begin{equation}\label{eq:linears1}
\omega_{\alpha}=dy-\alpha dx.
\end{equation}
Then, using $\pi$, we obtain a pencil of linear foliations $\Pc=\{\G_{\alpha}\}_{\alpha\in\ovl{\ce}}$ in
$X$. Define
\[
I_p(\Pc):=\{\alpha\in\ovl{\ce}\::\:\G_{\alpha}\mbox{ has an holomorphic first integral}\}.
\]

Given $\alpha\in\ce\setminus\{0\}$, let $L_{\alpha}= \big\{ \big(\pi(x),\pi(\alpha x)\big)\::\:x\in \ce\big\}$ be
the leaf of $\G_{\alpha}$ passing though $(0,0)$. Then:
\begin{align*}
 \#\big(L_{\alpha}\cap(\{0\}\times E)\big)<\infty &\sii \exists\, k\in\N\::\:k\alpha(m+\tau n)\in\Gamma,\,\forall\,m,n\in\Z,\\
						    &\sii \exists\, k\in\N\::\:k\Gamma(\alpha)\subset\Gamma,\mbox{ where }\Gamma(\alpha)=\alpha\Gamma.
\end{align*}
In particular, for $\alpha\in\ce\setminus\{0\}$, $\G_{\alpha}$ has an holomorphic first integral if, 
and only if, there exists $k\in \N$ 
such that $k\Gamma(\alpha)\subset \Gamma$. So we have the following Lemma.
\begin{lemm}\label{lem:lem1}
Let $\Pc=\{\G_{\alpha}\}_{\alpha\in\ovl{\ce}}$ be a pencil of linear foliations in $X$, as above. Then
\[
I_p(\Pc)= (\Q+\tau \Q)\cup\{\infty\}.
\]
\end{lemm}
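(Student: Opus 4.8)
The plan is to feed on the equivalence proved just above the statement: for $\alpha\in\ce\setminus\{0\}$ the foliation $\G_\alpha$ has a holomorphic first integral if and only if $k\Gamma(\alpha)\subseteq\Gamma$ for some $k\in\N$, where $\Gamma(\alpha)=\alpha\Gamma$. Since $\Gamma=\langle 1,\tau\rangle$, the inclusion $k\alpha\Gamma\subseteq\Gamma$ is equivalent to the two membership conditions $k\alpha\in\Gamma$ and $k\alpha\tau\in\Gamma$. I would prove the two inclusions of the asserted equality by translating these conditions into statements about $\Q+\tau\Q$, and then treat the excluded values $\alpha=0$ and $\alpha=\infty$ by hand, since the cited equivalence only covers $\alpha\in\ce\setminus\{0\}$.

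For the inclusion $I_p(\Pc)\subseteq(\Q+\tau\Q)\cup\{\infty\}$, take $\alpha\in I_p(\Pc)\cap(\ce\setminus\{0\})$. The equivalence gives $k\in\N$ with $k\alpha\in\Gamma$, so $k\alpha=a+b\tau$ with $a,b\in\Z$, whence $\alpha=\tfrac{a}{k}+\tfrac{b}{k}\tau\in\Q+\tau\Q$. This direction uses only the first of the two conditions and holds for an arbitrary $\tau$.

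For the reverse inclusion I would take $\alpha=p+q\tau\in\Q+\tau\Q$ with $\alpha\neq 0$ and choose $k\in\N$ clearing the denominators of $p$ and $q$, so that $k\alpha=kp+kq\tau\in\Gamma$. The point where the argument genuinely uses the arithmetic of $E$ is the second condition $k\alpha\tau\in\Gamma$: expanding, $k\alpha\tau=kp\,\tau+kq\,\tau^2$, which lies in $\Gamma$ precisely because $\tau^2\in\Z+\tau\Z$. For the curve relevant here, $\tau=\tau_0=e^{2\pi i/3}$ satisfies $\tau_0^2=-1-\tau_0$, so $k\alpha\tau_0=-kq+(kp-kq)\tau_0\in\Gamma$; hence the same $k$ gives $k\alpha\Gamma\subseteq\Gamma$ and $\alpha\in I_p(\Pc)$. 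I expect this to be the crux: the closure of $\Q+\tau\Q$ under multiplication by $\tau$, i.e.\ the complex-multiplication relation $\tau^2\in\Q+\tau\Q$. Indeed, for a $\tau$ with $1,\tau,\tau^2$ linearly independent over $\Q$ the condition $k\alpha\tau\in\Gamma$ would force $q=0$, so this reverse inclusion really does rely on the quadratic relation satisfied by $\tau_0$.

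It remains to dispose of the two distinguished parameters. For $\alpha=0$ the form is $\omega_0=dy$, whose leaves are the compact fibres $E\times\{c\}$, so the projection onto the second factor is a holomorphic first integral and $0\in I_p(\Pc)$; note also $0\in\Q+\tau\Q$. For $\alpha=\infty$ the foliation is given by $dx=0$, with compact leaves $\{c\}\times E$ and first integral the projection onto the first factor, so $\infty\in I_p(\Pc)$. Combining the two inclusions with these endpoint computations yields $I_p(\Pc)=(\Q+\tau\Q)\cup\{\infty\}$.
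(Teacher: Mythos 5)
Your proof is correct, and it follows the route the paper itself intends: the paper offers no separate argument for this lemma, deriving it immediately from the criterion stated just above it (for $\alpha\in\ce\setminus\{0\}$, $\G_{\alpha}$ has a holomorphic first integral if and only if $k\Gamma(\alpha)\subset\Gamma$ for some $k\in\N$), and you work from exactly that criterion. Two things you add are worth recording. First, you handle $\alpha=0$ and $\alpha=\infty$ by exhibiting the coordinate projections as first integrals; the paper's criterion, stated only for $\alpha\in\ce\setminus\{0\}$, does not cover these values, so this is a genuine (if small) completion. Second, and more substantively, your observation about the crux is a correction of the statement itself: the inclusion $k\alpha\Gamma\subset\Gamma$ amounts to $k\alpha\in\Gamma$ \emph{and} $k\alpha\tau\in\Gamma$, and for $\alpha=p+q\tau$ with $q\neq 0$ the second condition forces $\tau^2\in\Q+\tau\Q$. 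Hence for a lattice $\langle 1,\tau\rangle$ with $1,\tau,\tau^2$ linearly independent over $\Q$ one gets only $I_p(\Pc)=\Q\cup\{\infty\}$, and the lemma as printed, for arbitrary $\tau$, is false; it holds precisely when $\tau$ is imaginary quadratic (the CM case), e.g.\ $\tau_0$ with $\tau_0^2=-1-\tau_0$, which is the only case the paper actually uses for $\Pc_4^*$. So your argument both proves the statement in the case needed downstream and identifies the missing hypothesis ($\tau$ quadratic over $\Q$) that the lemma should carry.
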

In the case $\Gamma_0=\langle 1,\tau_0 \rangle$ and $E_0=\ce/\Gamma_0$, denoted $X_0= E_0\times E_0$. The pencil
$\{\G_{\alpha}\}_{\alpha\in\ovl{\ce}}$ in $X_0$ induced by~\eqref{eq:linears1}
will be denoted by
$\Pc_4^*$.
In particular, by  Lemma~\ref{lem:lem1} we have
\begin{equation}
I_p(\Pc_4^*)=
(\Q+\tau_0 \Q)\cup\{\infty\}=\Q(\tau_0)\cup\{\infty\}.
\end{equation}
\subsection{The configuration $\mathcal{C}^*$ in $X_0$}

Let $\varphi:X_0\to X_0$ be the holomorphic map defined by $\varphi(x,y)= (\tau_0 x,\tau_0 y)$. 
Then,
\begin{enumerate}
\item $\varphi^3=id_X$.
\item Defining $p_1=0$, $p_2=\frac{2}{3}+\frac{1}{3}\tau_0$ and $p_3=\frac{1}{3}+\frac{2}{3}\tau_0$ then
 $\fix(\varphi)=\big\{(p_l,p_k) \big\}_{l,k=1}^3$ is the set of the nine fixed
points of $\varphi$. Denote by $\{l_k\}_{k=1}^9$ the nine fixed points of $\varphi$, then
\[
\fix(\varphi)=\{l_1,\ldots,l_{9}\}.
\]
\end{enumerate}

\noindent Now consider the four elliptic curves in $X_0$:
\begin{align*}
E_{0,1}&=\{0\}\times E_0,& E_{1,1}&=\big\{(x,x)\::\:x\in E_0\big\},\\
E_{1,0}&=E_0\times\{0\},& E_{1,-\tau_0}&=\big\{(x,-\tau_0 x)\::\:x\in E_0 \big\}.
\end{align*}
Let $\Cs$ the set of these four elliptic curves.  
Given $F\in\Cs$ and $p\in\fix(\varphi)$, denote $F_p=F+p$. 
Hence, the set $
\mc{E}:=\{F_p\::\: p\in\fix(\varphi), F\in\Cs\}$
consists of twelve elliptic curves, which we denote $E_1,\ldots, E_{12}$, that is,
\[
\mc{E}=\{E_1,\ldots,E_{12}\}.
\]
Since,
$\varphi(F_p)=F_p$ and $\fix(\varphi)\cap F_p=(\fix(\varphi)\cap F)+p$ then
fixed two different elliptic curves they 
intersect only in three fixed points of $\varphi$.

Let $\C^*=\big(\fix(\varphi),\mc{E}\big)$ be the configuration of points and
elliptic
curves in $X_0$, 
showed in Figure~\ref{fig2}.
\begin{figure}[h]
\begin{center}
\input{fignn4.pspdftex}
\end{center}
\caption{}\label{fig2}
\end{figure}

\section{Relation between the pencils $\mathcal{P}_4^*$ and $\mathcal{P}_4$ }
The relation between the pencils $\Pc_4^*$ and $\Pc_4$ was given by McQuillan in~\cite[p. 108]{BR1}, where 
he proved the existence of a rational map $g:X_0\dashrightarrow\P^2$ such that $g^*(\Pc_4)=\Pc_4^*$.
 We now give an idea of how the function $g$ is constructed. We refer the reader to~\cite{LN-Puch} for the details.

Let $\pi:Bl_{\fix(\varphi)}(X_0)\to X_0$ be obtained
 from $X_0$ by blowing-up the nine fixed points of $\varphi$, and denote $D_k=\pi^{-1}(l_k)$, for $k=1,\ldots,9$. 
Defining $\tilde{X}=Bl_{\fix(\varphi)}(X_0)$, there is an automorphism 
$\tilde{\varphi}:\tilde{X}\to\tilde{X}$
such that $\pi\circ\tilde{\varphi}=\varphi\circ\pi$.
Let $\tilde{Y}=\tilde{X}/\langle\tilde{\varphi}\rangle$ then $\tilde{Y}$ is a
smooth rational surface such that the quocient map $\tilde{h}:\tilde{X} \to
\tilde{Y}$
is a finite morphism with degree 3, and its ramification divisor is $R=\sum_{i=1}^9 3D_k$.

Since, $\tilde{h}|_{D_i}:D_i \to h(D_i)$ is a biholomorphism, the rational map $\tilde{h}$ maps $D_i$ in a rational curve with autointersection $-3$,
for $i=1,\ldots,9$. 
Besides $\tilde{h}$ maps each  elliptic curve $\pi^*E_i$, $E_i \in \mc{E}$, in a rational
curve $\tilde{E}_i$ with autointersection  $-1$, for $i=1,\ldots,12$,
as showed in Figure~\ref{fig3}.
\begin{figure}[h]
\begin{center}
\input{fignn5.pspdftex}
\end{center}
\caption{}\label{fig3}
\end{figure}

\subsection{Relation between $\mathcal{C}$ and $\mathcal{C}^*$}
Let $\pi_1:\tilde{Y}\to Y_0$ be the blowing-down map of the curves
$\tilde{E}_1,\ldots,\tilde{E}_{12}$.
\begin{lemm}
With the notations above defined we have that $Y_0=\P^2$.
\end{lemm}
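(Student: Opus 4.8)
The plan is to identify $Y_0$ by computing its numerical invariants and then invoking the classification of surfaces, specifically the characterization of $\P^2$ among smooth projective surfaces. First I would track the surface through the chain of modifications we already have in hand: starting from the complex torus $X_0 = E_0 \times E_0$, we pass to $\tilde X = Bl_{\fix(\varphi)}(X_0)$ by blowing up the nine fixed points, then to the quotient $\tilde Y = \tilde X/\langle\tilde\varphi\rangle$ via the degree-$3$ finite morphism $\tilde h$, and finally to $Y_0$ by the blow-down map $\pi_1$ contracting the twelve $(-1)$-curves $\tilde E_1,\ldots,\tilde E_{12}$. Since $\tilde Y$ is already known to be a smooth rational surface, $Y_0$ is a smooth rational surface as well, because blowing down $(-1)$-curves on a smooth surface again yields a smooth surface and preserves rationality.

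The core of the argument is to show that this smooth rational surface has Picard number $\rho(Y_0)=1$ and hence, being rational, is $\P^2$. The clean way to organize this is to compute the canonical bundle and the self-intersection of $-K_{Y_0}$, or equivalently to chase the second Betti number through each step. On $\tilde X$ we have $b_2(\tilde X) = b_2(X_0) + 9 = 4 + 9 = 13$, since $b_2(E_0\times E_0)=4$ (the two factor curves plus the diagonal-type classes, with $h^{1,1}$ accounting and the torus contributing $b_2=6$, of which the relevant algebraic part must be handled carefully). For the quotient $\tilde Y$, the rank of the Néron--Severi group drops because $\tilde\varphi$ acts on cycles and only invariant classes descend; then contracting the twelve curves $\tilde E_i$ to pass to $Y_0$ lowers the Picard number by twelve more. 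The arithmetic should be arranged so that $\rho(Y_0)=1$ falls out. Then I would verify $K_{Y_0}^2 = 9$ and $-K_{Y_0}$ ample, which together with rationality pins down $Y_0 \cong \P^2$ by the Enriques--Kodaira classification (a smooth rational surface with $\rho=1$ is $\P^2$).

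The geometric heart — and the step I expect to be the main obstacle — is the bookkeeping of the ramification divisor $R=\sum_{i=1}^9 3D_k$ through the ramification formula $K_{\tilde X} = \tilde h^* K_{\tilde Y} + R$, combined with the fact that the exceptional curves $D_k$ map to $(-3)$-curves while the elliptic curves map to the $(-1)$-curves $\tilde E_i$. One must reconcile these intersection numbers consistently: the nine $(-3)$-curves $\tilde h(D_k)$ on $\tilde Y$ and the twelve $(-1)$-curves $\tilde E_i$ should, after contracting the $\tilde E_i$, reorganize into the configuration $\C$ of nine lines meeting in twelve points on $Y_0$. Concretely, I would check that the images of the nine $(-3)$-curves become the nine lines $L_1,\ldots,L_9$ of $\C$ (each $(-3)+$twelve contractions raising self-intersection appropriately to $+1$, the self-intersection of a line in $\P^2$), and that the twelve contracted points become the twelve points $e_1,\ldots,e_{12}$ of $P$. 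This matching of $\C^*$ with $\C$ under the birational correspondence is precisely what confirms $Y_0=\P^2$ and not some other rational surface, so the delicate part is verifying that all intersection numbers and the canonical class are simultaneously consistent with $\P^2$ rather than merely with a rational surface of the correct Betti number.
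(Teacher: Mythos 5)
Your overall strategy --- show that $Y_0$ is a smooth rational surface with $b_2(Y_0)=1$ and invoke the classification of minimal rational surfaces --- is sound, and it ends at the same classification step as the paper. But the heart of your argument, the computation that is supposed to yield $\rho(Y_0)=1$, is both left undone and set up with inconsistent numbers, so as written there is a genuine gap exactly where the work lies. First, the bookkeeping is off: $b_2(E_0\times E_0)=6$ for an abelian surface, while $4$ is the rank of its N\'eron--Severi group (and it equals $4$ here only because $E_0$ has complex multiplication); these are different invariants and cannot be interchanged. Second, your expectation that ``the rank of the N\'eron--Severi group drops'' under the quotient is false for this action: $\tilde\varphi$ fixes each exceptional divisor $D_k$ and fixes every class in $NS(X_0)$, because $\varphi^*$ acts by $\tau_0^{2}$ on $H^{2,0}$, by $\tau_0^{-2}$ on $H^{0,2}$, and trivially on $H^{1,1}$; only the transcendental part of $H^2$ is killed by passing to invariants. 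Hence $\rho(\tilde Y)=\rho(\tilde X)=13$, with no drop at all. Had you carried out your stated plan --- start from $13$, subtract a positive drop, then subtract $12$ for the contractions --- you would land at $\rho(Y_0)<1$, which is absurd. The correct account is: $b_2(\tilde X)=6+9=15$, the $\tilde\varphi$-invariant part of $H^2(\tilde X,\Q)$ has dimension $h^{1,1}(X_0)+9=4+9=13$, so $b_2(\tilde Y)=13$ and $b_2(Y_0)=13-12=1$. Writing instead that ``the arithmetic should be arranged so that $\rho(Y_0)=1$ falls out'' assumes the conclusion; that arithmetic is the entire content of the lemma.

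For comparison, the paper avoids the action on cohomology altogether: it applies the Riemann--Hurwitz formula for the topological Euler number of the degree-$3$ cover $\tilde h$, namely $c_2(\tilde X)=3c_2(\tilde Y)-\sum_{k=1}^9 2\chi(D_k)$, with $c_2(\tilde X)=0+9=9$ (the torus has $c_2=0$ and each blow-up adds $1$) and $\chi(D_k)=2$, which gives $c_2(\tilde Y)=15$ and then $c_2(Y_0)=15-12=3$ after contracting the twelve $(-1)$-curves. Since a rational surface has $c_2=2+b_2$, this is the same conclusion $b_2(Y_0)=1$; minimality follows because any non-minimal surface has $\rho\geq 2$, and then $Y_0=\P^2$ since the Hirzebruch surfaces have $c_2=4$. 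If you repair your cohomological bookkeeping as indicated, your proof goes through as a legitimate variant of the paper's; the additional items you propose --- verifying $K_{Y_0}^2=9$, ampleness of $-K_{Y_0}$, and matching the configuration $\C^*$ with $\C$ --- are then unnecessary for this lemma (the configuration matching is handled separately in the paper, via Proposition 1 of Lins Neto, after $Y_0=\P^2$ is already known).
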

\begin{proof}
By the Riemann-Hurwitz formula for surfaces 
we have
\[
c_2(\tilde{X})=3c_2(\tilde{Y})-\sum_{i=1}^9 2\chi(D_k),
\]
where $c_2(\tilde{X})=9$ and $\chi(D_k)=2$, for $k=1,\ldots, 9$. Therefore, $c_2(\tilde{Y})=15$ and $c_2(Y_0)=3$. 
This implies that $Y_0$ is a minimal surface, by the Noether formula (cf.~\cite{GH}). Since the only minimal
rational surfaces are $\P^2$ and the Hirzebruch surfaces $S_n$, with $n\geq 2$, we have $Y_0=\P^2$ because
$c_2(S_n)\geq 4$.
\end{proof}

Let the rational map
\[
g=\pi_1^{-1}\circ
\tilde{h}\circ\pi:X_0\dashrightarrow Y_0=\P^2
\]
(see the figure~\ref{fig4}). 
Let $\mc{E}_*:=g(\mc{E})$ and $\fix(\varphi)_*:=g(\fix(\varphi))$
Then $g$ maps each elliptic curve $E \in \mc{E}$ in
a point
in
$\P^2$, so $\mc{E}_*$ consist of twelve points  in $\P^2$. Besides, 
$g$ maps  each $l\in \fix(\varphi)$ in an algebraic curve $L
$ in $\P^2$ such that $L \cdot L=1$. In particular, $L$ is a line in
$\P^2$ and so, $\fix(\varphi)_*$ consist of nine lines. 
Besides the configuration $\{\mc{E}_*,\fix(\varphi)_*\}$ 
of points and  lines in $\P^2$ satisfy
the following properties  

\begin{enumerate}
\item Each line  in $ \fix(\varphi)_*$ contains four points of
$\mc{E}_*$.
\item Each point of $\mc{E}_*$ belongs to two lines of $
\fix(\varphi)_*$.
\item If three points of $\mc{E}_*$ are not in a line  in $
\fix(\varphi)_*$ then the points are not aligned.
\end{enumerate}
Then, by Proposition 1 of
 \cite{LN1}, unless an automorphism of $\P^2$, we can to suppose que
the configuration obtained is the configuration $\C$, that is, 
$\C=\big(\fix(\varphi)_*,\mc{E}_* \big)$ that has been described in
the section~\ref{secp4conj}.

\subsection{Relations between the foliations in $\mathcal{P}_4$ and $\mathcal{P}_4^*$}
Recall that, fixed $\alpha \in \ovl{\ce}$, the foliation $\G_{\alpha} \in \Pc_4^*$ in $X_0$ is induced by the 
$\omega_{\alpha}=dy-\alpha dx$. Since the 1-form $\omega_{\alpha}$ is $\varphi$-invariant, 
$\G_{\alpha}$ induces a foliation $\F_{\alpha}$ in $\P^2$ as showed in Figure~\ref{fig4}.
\begin{figure}[h]
\begin{center}
\input{figurabase6.pspdftex}
\end{center}
\caption{}\label{fig4}
\end{figure}
Besides, all the lines of $\fix(\varphi)_*$ are invariant respect to $\F_{\alpha}$. Then 
by (cf. \cite[\S 2.2]{LN1}) there exists an unique $\Lambda(\alpha)\in\ovl{\ce}$ such that 
$\F_{\alpha}=\F^4_{\Lambda(\alpha)}$, where $\F^4_{\Lambda(\alpha)} \in \Pc_4$. In particular $g^*(\Pc_4)=\Pc_4^*$.
\begin{lemm}
The rational function $\Lambda:\ovl{\ce} \to \ovl{\ce}$ is a M\"{o}bius map defined by $\Lambda(\alpha)=(\tau_0^2-1)\alpha+1$.
\end{lemm}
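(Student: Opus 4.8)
The plan is to treat \emph{``$\Lambda$ is a Möbius map''} as a formal consequence of the relation $g^*(\Pc_4)=\Pc_4^*$, and then to determine the explicit map by evaluating it at three distinguished parameters. For the first point, recall that $\Pc_4$ is the projectivization of the two--dimensional space $\langle\omega,\eta\rangle$ of $1$--forms, while $\Pc_4^*$ is the projectivization of $\langle dx,dy\rangle$. Since $\theta\mapsto g^*\theta$ is $\ce$--linear, $g^*$ carries $\langle\omega,\eta\rangle$ into $\langle dx,dy\rangle$ up to one common meromorphic factor $f$ determined by $g$; writing $g^*\omega=f(a\,dx+b\,dy)$ and $g^*\eta=f(c\,dx+d\,dy)$ we get $g^*(\omega+\beta\eta)=f\big[(a+\beta c)\,dx+(b+\beta d)\,dy\big]$, which defines the same foliation as $\omega_\alpha=dy-\alpha\,dx$ exactly when $-\alpha=(a+\beta c)/(b+\beta d)$. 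As $g^*(\Pc_4)=\Pc_4^*$ is a bijection of the two parameter lines induced by a linear isomorphism of the underlying $2$--dimensional spaces, the resulting assignment $\beta\mapsto\alpha$ is projective--linear; inverting it expresses $\beta=\Lambda(\alpha)$ as a Möbius map. So being Möbius is automatic, and the only content is the explicit coefficients.

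To fix those coefficients it suffices to locate three matched pairs $(\alpha,\Lambda(\alpha))$, which I would read off from the four degenerate members of each pencil. On the torus side, since $\G_\alpha$ is translation--invariant, a curve of $\Cs$ together with its two $\fix(\varphi)$--translates becomes a leaf--family of $\G_\alpha$ precisely when $\omega_\alpha$ annihilates its tangent direction; writing a curve as $\{(ax,bx)\}$, this happens iff $\alpha=b/a$. This gives $E_{0,1}\ (\{x=0\})$ at $\alpha=\infty$, $E_{1,0}\ (\{y=0\})$ at $\alpha=0$, $E_{1,1}$ at $\alpha=1$, and $E_{1,-\tau_0}$ at $\alpha=-\tau_0$; these are exactly the four values where $\G_\alpha$ picks up three extra invariant curves from $\mc{E}$. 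On the $\P^2$ side the four exceptional members of $\Pc_4$ are those with $\sing(\F^4_\beta)=P$, namely $\beta\in\{1,\tau_0,\tau_0^2,\infty\}$. Because $g^*$ matches leaf--structure, these four torus--parameters correspond bijectively to these four $\P^2$--parameters.

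Finally, I would orient the bijection using the explicit identification of the configurations $\mathcal{C}^*\leftrightarrow\mathcal{C}$ established in the previous subsection, tracking each distinguished curve of $\Cs$ to its degenerate member; this yields $\infty\mapsto\infty$, $0\mapsto1$, $1\mapsto\tau_0^2$, $-\tau_0\mapsto\tau_0$. Since $\Lambda(\infty)=\infty$ the map is affine, $\Lambda(\alpha)=A\alpha+B$, and $\Lambda(0)=1$, $\Lambda(1)=\tau_0^2$ force $B=1$ and $A=\tau_0^2-1$, giving $\Lambda(\alpha)=(\tau_0^2-1)\alpha+1$. The fourth pair is then a consistency check, since $\Lambda(-\tau_0)=-\tau_0(\tau_0^2-1)+1=-\tau_0^3+\tau_0+1=\tau_0$ using $\tau_0^3=1$. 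The main obstacle is precisely this orientation step: proving which special $\alpha$ is sent to which special $\beta$ requires faithfully following the distinguished curves and the defining forms through the birational map $g=\pi_1^{-1}\circ\tilde h\circ\pi$ (nine blow--ups, a degree--$3$ quotient, and twelve blow--downs). Once three pairs are matched correctly, extracting the affine formula and verifying the fourth pair are routine.
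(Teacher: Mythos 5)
Your proposal is correct and follows essentially the same route as the paper: both arguments pin down $\Lambda$ by matching the four degenerate members of $\Pc_4^*$ (parameters $0,1,-\tau_0,\infty$, detected through the extra invariant curves, equivalently the twelve radial singularities downstairs) with the four degenerate members of $\Pc_4$ (parameters $1,\tau_0,\tau_0^2,\infty$) via the configuration correspondence $\C^*\leftrightarrow\C$, and then solve for the affine map from three of the pairs, using $\Lambda(-\tau_0)=\tau_0$ as the consistency check. Your opening paragraph justifying that $\Lambda$ is necessarily projective--linear (from the linear action of $g^*$ on the $2$-dimensional spaces of $1$-forms spanning the pencils) supplies a detail the paper leaves implicit --- it only invokes injectivity of the already-assumed-rational $\Lambda$ --- and, like the paper, you defer the delicate orientation step to the previously established identification of the two configurations rather than tracking the curves through $g$ explicitly.
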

\begin{proof}
Since $\F_{\Lambda(0)}$, $\F_{\Lambda(1)}$, $\F_{\Lambda(-\tau_0)}$ and $\F_{\Lambda(\infty)}$ have 
twelve singularities, we have
\[
\big\{\Lambda(0),\Lambda(1),\Lambda(-\tau_0),\Lambda(\infty)\big\}=\{1,\tau_0,
\tau_0^2 , \infty\}.
\]
The configurations $\C^*$ in $X$ and $\C$ in $\P^2$ (see Figures~\ref{fig1} and \ref{fig2}), imply
\begin{align*}
g^*(\F_{\infty}^4)&=\G_{\infty}, &g^*(\F_1^4)&=\G_0,\\
g^*(\F_{\tau_0^2}^4)&=\G_1, &g^*(\F_{\tau_0}^4)&=\G_{-\tau_0}.
\end{align*}
Then $\Lambda:\ovl{\ce} \to \ovl{\ce}$ is an injective function such that
$\Lambda(\infty)=\infty$, $\Lambda(0)=1$, $\Lambda(1)=\tau_0^2$ and
$\Lambda(-\tau_0)=\tau_0$.
Therefore $\Lambda(\alpha)=(\tau_0^2-1)\alpha+1=(-2-\tau_0)\alpha+1$.
\end{proof}
\begin{rema}
If we have a automorphism of $\P^2$ preserving the configuration 
$\C=\big(\mc{E}_*,\fix(\varphi)_*\big)$ of points and 
lines, then $\Lambda$ is a M\"{o}bius map such that
\[
\big\{\Lambda(0),\Lambda(1),\Lambda(-\tau_0),\Lambda(\infty)\big\}=\{1,\tau_0,
\tau_0^2 , \infty\}.
\]
\end{rema}

\section{Calculation of the  degree of the first integral of a foliation $\mathcal{F}_t^4\in \mathcal{P}_4$ , $t\in \mathbb{Q}(\tau_0)$.}

Let $\F_t^4\in \Pc_4$, with $t\in \Q(\tau_0)$. Then there exists an unique foliation
$\G_{\alpha}\in \Pc_4^*$ such that $g^*(\G_{\alpha})=\F_t^4$, where $\alpha=\Lambda^{-1}(t)$. 
Since $\Z[\tau_0]$ is a unique factorization domain, we can choose $\alpha_1,\beta_1\in\Z[\tau_0]$ and $(\alpha_1,\beta_1)=1$, such that 
$\alpha=\ds\frac{\alpha_1}{\beta_1}$.
In particular,  $\G_{\alpha}$ is induced by the 1-form $\omega=\beta_1 dy-\alpha_1 dx$. Besides, 
$f_{\alpha_1,\beta_1}=\beta_1 y-\alpha_1 x$ is a first integral of
$\G_{\alpha}$ and 
\[
E_{\alpha_1,\beta_1}=\big\{(\alpha_1 x,\beta_1 x): x \in E \big\} 
\]
is  the leaf of $\G_{\alpha}$ passing by $(0,0)$.

Let $F_t$ be the rational first integral of $\F^4_t$ of degree $d_t$. We want to determine $d_t$. 
For this, let $C$ a generic irreducible fiber of $F_t$ of degree $d_t$. 
We can suppose that $C^*:=g^*(C)=E_{\alpha_1,\beta_1}+p$, where $p \notin
\fix(\varphi)$. 
Let $C_{1,0}^*:=E_{1,0}+p$ in $X_0$ and $C_{1,0}=g(C_{1,0}^*)$ the
curve obtained in $\P^2$. 
The idea for calculate $d_t$ is to find the relation between the intersection of $C$ and $C_{1,0}$ in $\P^2$ and the intersection
of $C^*$ and $C_{1,0}^*$ in $X_0$ (see Figure~\ref{fig5}). 
\begin{figure}[h]
\begin{center}
\input{figurabase13.pspdftex}
\end{center}
\caption{}\label{fig5}
\end{figure}

We observe that
\begin{equation}\label{eq:eqeq}
 d_t\deg(C_{1,0})=C\cdot C_{1,0}=\pi_1^*(C)\cdot \pi_1^*(C_{1,0}). 
\end{equation}
Since $C_{1,0}\cap L_7=\{e_{10},e_5,e_9\}$ (see Figure~\ref{fig1}), where $e_{10},e_5,e_9$ are radial singularities 
of $\F^4_1$ and $\pi_1^*C_{1,0}$ is a regular curve, we have  $\deg(C_{1,0})=3$.
Let $\tilde{C}$ and $\tilde{C}_{1,0}$ the strict transformations of $C$ and $C_{1,0}$ by $\pi_1$, respectively,
then
\begin{equation}\label{eq:eq5}
\pi_1^*(C)=\tilde{C}+\sum_{p\in \mc{E}_*\cap C}m_pD_p,
\end{equation}
where  $m_p$ is the multiplicity of $C$ in $p$ and $D_p=\pi_1^{-1}(p)$. Besides
\begin{equation}\label{eq:eq6}
\pi_1^*(C_{1,0})=\tilde{C}_{1,0}+\sum_{p\in \mc{E}_*\cap C_{1,0}}D_p,
\end{equation}
where $\mc{E}_*\cap C_{1,0}=\mc{E}_*\setminus\{e_1,e_6,e_8\}$.

Combining~\eqref{eq:eq5} and \eqref{eq:eq6} in~\eqref{eq:eqeq} we obtain
\begin{equation}\label{equation8}
\begin{split}
3 d_t
		  &=\tilde{C}\cdot \tilde{C}_{1,0}+\sum_{p\in \mc{E}_*\cap C_{1,0}}\tilde{C}\cdot D_p+
\sum_{p\in \mc{E}_*\cap C}m_p\tilde{C}_{1,0}\cdot D_p-
\sum_{p\in \mc{E}_*\cap C_{1,0}}m_p\\
		  &=\tilde{C}\cdot \tilde{C}_{1,0}+\sum_{p\in \mc{E}_*\cap C_{1,0}}\tilde{C}\cdot D_p+
\sum_{p\in \mc{E}_*\cap C_{1,0}}m_p\tilde{C}_{1,0}\cdot D_p-
\sum_{p\in \mc{E}_*\cap C_{1,0}}m_p\\
\end{split} 
\end{equation}
Now, given $p\in \mc{E}_*\cap C_{1,0}$ we have
\begin{align*}
\tilde{C}_{1,0}\cdot D_p&=C^*_{1,0}\cdot E_p=1,\\
 \tilde{C}\cdot D_p & =C^*\cdot E_p=m_p,
\end{align*}
where $E_p \in \mc{E}$ is a elliptic curve in $X_0$ such that
 $g(E_p)=p$. 
Hence in~\eqref{equation8},
\begin{equation}\label{eqqq}
 3 d_t=\tilde{C}\cdot \tilde{C}_{1,0}+\sum_{p\in \mc{E}_*\cap C_{1,0}} C^*\cdot E_p.
\end{equation}

Let $\tilde{C^*}_{1,0}$ and $\tilde{C^*}$, the strict 
transformations of $C_{1,0}^*$ and $C^*$ by $\pi$, respectively, then 
$C^*\cdot C^*_{1,0}=
\tilde{C^*}\cdot\tilde{C^*}_{1,0}.
$
Since $\tilde{h}^*\tilde{C}=3\tilde{C^*}$ and
 $\tilde{h}^*\tilde{C}_{1,0}=3\tilde{C^*}_{1,0}$ then by the Projection Formula, we have
\[
3C^*\cdot3 C^*_{1,0}=3\tilde{C^*}\cdot 3\tilde{C^*}_{1,0}=(\tilde{h}^*\tilde{C}\cdot \tilde{h}^*\tilde{C}_{1,0})=3\tilde{C}\cdot\tilde{C}_{1,0}, 
\]
obtaining $3 C^*\cdot C^*_{1,0}=\tilde{C}\cdot \tilde{C}_{1,0}$. Then, in~\eqref{eqqq}, we obtain
\begin{align*}
3 d_t&=3 C^*\cdot C^*_{1,0}+\sum_{p\in \mc{E}_*\cap C^*_{1,0}}C^*\cdot E_p,\\
 &=3 C^*\cdot E_{1,0}+3 C^*\cdot E_{0,1}+3C^*\cdot E_{1,1}+3 C^* \cdot E_{1,-\tau_0}.
\end{align*}
Therefore
\[
d_t=3 E_{\alpha_1,\beta_1}\cdot E_{1,0}+3 E_{\alpha_1,\beta_1}\cdot E_{0,1}+3E_{\alpha_1,\beta_1}\cdot E_{1,1}+
3 E_{\alpha_1,\beta_1}\cdot E_{1,-\tau_0}.
\]

Denoting $N(\alpha):=N_{\Z[\tau_0]}(\alpha)$, $\alpha\in \Z(\tau_0)$, by the example~\ref{ex1} we have
\[
3 d_t=3N(-\beta_1)+3N(\alpha_1)+3N(\alpha_1-\beta_1)+3N(-\alpha_1 \tau_0-\beta_1),
\]
Hence,
\begin{equation}\label{calgrado}
d_t=N(\beta_1)+N(\alpha_1)+N(\beta_1-\alpha_1)+N(\beta_1+\tau_0\alpha_1),
\end{equation}
where $\ds \Lambda(t)=\alpha=\frac{\alpha_1}{\beta_1}$.  
\begin{rema}
In~\eqref{calgrado} if $\alpha=a+\tau_0 b$ and $\beta=c+\tau_0 d$ then:
\[
d_t=d_t(a,b,c,d)=3( a^2 -  a b +  b^2 -  a c +  c^2 +  a d -  b d -  c d
+  d^2).
\]
In particular, $d_t$ is a multiple of 3.
\end{rema}

\subsection{The growth of the pencil $\mathcal{P}_4$}
In~\cite{Vit1}, Pereira defines the counting function $\pi_C$ of an algebraic $C$ curve included in $\fol(2,d)$, 
the space of foliations in $\P^2$ of degree $d$. In this case, if $\Pc=\{\F_{\alpha}\}_{\alpha\in\ovl{\ce}}$ is a line 
 in $\fol(2,d)$, that is a pencil of foliations in $\P^2$, then given $n\in\N$, we have
$\pi_\Pc(n)=\# E_n$, where
$$
E_n=\{\alpha\in\ovl{\ce}\::\:\F_{\alpha}\text{ have a first integral of degree at most }n\}
$$
is an algebraic set of $\ovl{\ce}$.

Also, in such paper the author observes the importance of study the function
$\pi_{\Pc}$ 
and shows 
the following example (cf.~\cite[Example 3]{Vit1}).
\begin{exem}\label{ejfacil}
Let $\Pc=\{\F_{\alpha}\}_{\alpha\in\ovl{\ce}}$ a pencil in $\P^2$, where $\F_{\alpha}$ is given by
$$
\alpha xdy-ydx.
$$
In this case, given $\alpha\in\ovl{\ce}$,
$$
\alpha\in I_p(\Pc)\setminus \{\infty\}\sii\alpha\in\Q.
$$
Thus, suppose that $\alpha=\frac{p}{q}$, $p\in\Z$, $q\in\N$ and $(p,q)=1$. Let
$f_{p,q}$ be the first integral of $\F_{\alpha}$ of degree $d_{p,q}$, then
\[
d_{p,q}=\begin{cases}
         \max\{p,q\},&\text{if }p\geq 0,\\
         |p|+q,&\text{if }p<0,
        \end{cases}
\]
Then, by doing simple calculations,
$$
\pi_{\Pc}(n)=
            2+3\sum_{j=1}^n\varphi(j),
$$
where $\varphi$ is the  Euler totient function. Now, since
$$
\sum_{j=1}^n\varphi(j)=\frac{3n^2}{\pi^2}+O
\Big(n\ln(n)^{2/3}\ln\big(\ln(n)\big)^{4/3}\Big),
$$
 (cf.~\cite[p. 178]{walf}), we have
$\ds\lim_{n\to\infty}\frac{\pi_{\Pc}(n)}{n^2}=\frac{3}{\pi^2}$.
\end{exem}

Now, we will estimate $\pi_{\Pc^4}(n)$, for $n\in \N$, and see that the counting
function $\pi_{\Pc^4}$ has the same behavior as in Example~\ref{ejfacil}.
\begin{coro}
$$
\ds \pi_{\Pc_4}(n)=O(n^2).
$$
\end{coro}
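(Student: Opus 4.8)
The plan is to reduce the corollary to a lattice-point estimate for a positive definite quaternary quadratic form. First I would unwind the definitions: by construction $\pi_{\Pc_4}(n)=\#E_n$, and since $I_p(\Pc_4)=\Q(\tau_0)\cup\{\infty\}$, every parameter in $E_n$ other than the finitely many exceptional ones lying over $\{1,\tau_0,\tau_0^2,\infty\}$ is of the form $t=\Lambda(\alpha)$ with $\alpha=\alpha_1/\beta_1\in\Q(\tau_0)$ in lowest terms, and the degree of its first integral is the quantity $d_t$ given by~\eqref{calgrado}. Writing $\alpha_1=a+\tau_0 b$ and $\beta_1=c+\tau_0 d$, the preceding Remark expresses this as $d_t=3\,q(a,b,c,d)$ with $q(a,b,c,d)=a^2-ab+b^2-ac+c^2+ad-bd-cd+d^2$. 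Thus, up to an $O(1)$ coming from the exceptional parameters, $\pi_{\Pc_4}(n)$ is bounded by the number of integer vectors $(a,b,c,d)$ satisfying $q(a,b,c,d)\le n/3$.

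Next I would verify that $q$ is positive definite. Because $d_t=N(\beta_1)+N(\alpha_1)+N(\beta_1-\alpha_1)+N(\beta_1+\tau_0\alpha_1)$ is a sum of four norms, and each $N(\gamma)=|\gamma|^2\ge 0$ vanishes only for $\gamma=0$, the form $q$ vanishes exactly when $\alpha_1=\beta_1=0$, i.e.\ at the origin of $\re^4$. Hence $q$ is a positive definite quadratic form in four variables, and the solid region $\{(a,b,c,d)\in\re^4: q\le n/3\}$ is an ellipsoid whose volume equals a fixed constant times $(n/3)^2$, so it is $O(n^2)$.

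Finally I would count. For the claimed upper bound it is enough to bound $\#E_n$ by the number of all integer points of this ellipsoid: dropping the coprimality condition on $(\alpha_1,\beta_1)$ only enlarges the set, while the six units of $\Z[\tau_0]$ and the exceptional parameters alter the count by at most a constant factor and an $O(1)$ term. By the standard estimate for lattice points of a positive definite quadratic form, the number of $(a,b,c,d)\in\Z^4$ with $q\le n/3$ equals $c\,n^2+O(n^{3/2})$ for an explicit constant $c>0$, and in particular is $O(n^2)$; combining these bounds yields $\pi_{\Pc_4}(n)=O(n^2)$. I expect no serious obstacle here: once~\eqref{calgrado} is available the argument is essentially bookkeeping, and the only genuinely substantive point is recognising $d_t$ as a positive definite quaternary form, which the sum-of-norms expression makes transparent. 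I would also remark that this argument in fact produces the sharper asymptotic $\pi_{\Pc_4}(n)\sim c\,n^2$, matching the quadratic growth of Example~\ref{ejfacil}, although only $O(n^2)$ is asserted.
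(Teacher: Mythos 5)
Your argument is correct for the stated bound, but it follows a genuinely different route from the paper. The paper never looks at $d_t$ as a quadratic form in four integer variables: it simply observes that $d_t\leq n$ forces $N(\alpha_1)\leq n$ and $N(\beta_1)\leq n$, so $\pi_{\Pc_4}(n)$ is bounded by the number of coprime pairs with both norms at most $n$, and then invokes analytic ideal-counting results from~\cite{ColJohn} --- namely $H(n)=cn+O(n^{1/2})$ for the number of ideals of $\Z[\tau_0]$ of norm at most $n$, together with a density statement involving $1/\zeta_{\Q(\tau_0)}(2)$ --- to conclude $O(n)\cdot O(n)=O(n^2)$. You instead exploit the full strength of~\eqref{calgrado}: recognising $d_t=3\,q(a,b,c,d)$ with $q$ a positive definite quaternary form (positive definiteness being immediate from the sum-of-norms expression), you bound $\pi_{\Pc_4}(n)$ by the number of lattice points in the ellipsoid $\{q\leq n/3\}$, which is $O(n^2)$ by the elementary dilation estimate for convex bodies. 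Your approach is more self-contained (no analytic number theory input) and uses the sharper constraint $d_t\leq n$ rather than the weaker coordinate-wise norm bounds; the paper's approach, in exchange, ties the count to the Dedekind zeta density, which parallels the Euler-totient density appearing in Example~\ref{ejfacil}. One caveat: your closing claim that the argument ``in fact produces'' the asymptotic $\pi_{\Pc_4}(n)\sim c\,n^2$ does not follow as stated, since you discarded the coprimality condition, which only yields an upper bound; to obtain a genuine asymptotic you would need to reinstate coprimality (e.g.\ by M\"obius inversion over $\Z[\tau_0]$) and quotient by the six units, exactly the kind of density correction the paper's $\zeta_{\Q(\tau_0)}(2)$ factor encodes. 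Since the corollary asserts only $O(n^2)$, this side remark does not affect the validity of your proof.
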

\begin{proof}
In fact, in this case
$$
t\in I_p(\Pc_4)\sii \Lambda^{-1}(t)=\alpha\in\Q(\tau_0)\cup \{\infty\},
$$
where $\Lambda(\alpha)=(\tau_0^2-1)\alpha+1$. Suppose that $\alpha=\frac{\alpha_1}{\beta_1}$, $\alpha_1,\beta_1\in\Z[\tau_0]$. 
Then
$$
\pi_ {\Pc_4}(n)=\#\Big\{\big(\alpha_1,\beta_1 \big)\in
(\Z[\tau_0]\times\Z[\tau_0])\setminus\{0\}\::\:(\alpha_1,\beta_1)=1,\ d_t\leq n
\Big\},
$$
where
$
d_t=N(\beta_1)+N(\alpha_1)+N(\beta_1-\alpha_1)+N(\beta_1+\tau_0\alpha_1).
$
Let
$$
\mc{E}_n= \Big\{\big(\alpha_1,\beta_1\big) \in (\Z[\tau_0]\times
I_p(\Pc))\setminus\{0\}\::\:t=\frac{\alpha_1}{\beta_1}, (\alpha_1,\beta_1)=1, \
N(\alpha_1) \leq n, N(\beta_1) \leq n \Big\},
$$
then
$$
\pi_{\Pc_4}(n)\leq \mc{E}_n, \quad \forall \  n \in \N.
$$
Let $H(n)=\big\{I \ \mbox{ideal in}  \ \Z[\tau_0] :N_{\Z[\tau_0]}(I) \leq n\big\}$ then by
\cite{ColJohn} we have
\begin{enumerate}
\item $H(n)=c n+O(n^{1/2})$, where $c$ is a constant.
\item\label{itemlim} $
\ds\lim_{n\to\infty}\frac{\mc{E}(n)}{H(n)^2} \leq \frac{1}{\zeta_{\Q(\tau_0)}(2)}
$, where $\zeta_{\Q(\tau_0)}$ is the Dedekind Zeta Function of $\Q(\tau_0)$ (see \S 2 ). 
\end{enumerate}
Therefore by the second item  we obtain
$$
\ds\lim_{n\to\infty}\frac{\pi_{\Pc_4}(n)}{H(n)^2} \leq \frac{1}{\zeta_{\Q(\tau_0)}(2)}.
$$
In particular $\ds \pi_{\Pc_4}(n)=O(n^2)$.
\end{proof}

\bibliographystyle{abbrv}
\bibliography{refer}
\end{document}